\newcommand{\nc}{\newcommand}
\def\KeyWord#1{$\backslash$\IfColor{$\!\!$\textRed{#1}\textBlack}{#1}$\!\!$}
\DeclareFontFamily{U}{shuffle}{}
\DeclareFontShape{U}{shuffle}{m}{n}{ <-8>shuffle7 <8->shuffle10}{}
\theoremstyle{plain}
\newtheorem{thm}{Theorem}[section]
\newtheorem{prop}[thm]{Proposition}
\newtheorem{lem-defn}[thm]{Lemma-Definition}
\newtheorem{conj}[thm]{Conjecture}
\theoremstyle{definition}
\newtheorem{rem}[thm]{Remark}
\nc{\db}{{\mathbb D}}
\nc{\AMZ}{{\mathsf{AMZ}}}
\nc{\AMT}{{\mathsf{AMT}}}
\nc{\FMZ}{{\mathsf{FMZV}}}
\nc{\MZV}{{\mathsf{MZV}}}
\nc{\MTZV}{{\mathsf{MTZV}}}
\nc{\gF}{{\varPhi}}
\nc{\gL}{{\Lambda}}
\nc{\ot}{\otimes}
\nc{\fA}{{\mathfrak A}}
\nc{\hfA}{{\widehat{\mathfrak A}}}
\nc{\F}{{\mathbb F}}
\nc{\Z}{{\mathbb Z}}
\nc{\R}{{\mathbb R}}
\nc{\N}{{\mathbb N}}
\nc{\Q}{{\mathbb Q}}
\nc{\CC}{{\mathbb C}}
\nc{\gc}{{\gamma}}
\nc{\gG}{{\Gamma}}
\nc{\om}{{\omega}}
\nc{\vep}{{\varepsilon}}
\nc{\ga}{{\alpha}}
\nc{\gl}{{\lambda}}
\nc{\gb}{{\beta}}
\nc{\gd}{{\delta}}
\nc{\gf}{{\varphi}}
\nc{\gs}{{\sigma}}
\nc{\gt}{{\tau}}
\nc{\gk}{{\kappa}}
\nc{\bgl}{{\boldsymbol{\gl}}}
\nc{\tgz}{{\tilde{\zeta}}}
\nc{\tk}{{\tilde{k}}}
\nc{\lra}{\longrightarrow}
\nc{\lmaps}{\longmapsto}
\nc{\ol}{\overline}
\nc{\gD}{{\Delta}}
\nc{\bfa}{{\bf a}}
\nc{\bfs}{{\bf s}}
\nc{\bfp}{{\bf p}}
\nc{\bfq}{{\bf q}}
\nc{\bfk}{{\bf k}}
\nc{\calA}{{\mathcal A}}
\nc{\calH}{{\mathcal H}}
\nc{\calP}{{\mathcal P}}
\nc{\sha}{\shuffle}
\nc{\Sy}{{\mathcal S}}
\nc{\inv}{{\rm inv}}
\nc{\Rac}{{\mathcal R}}
\nc{\Qxy}{\Q\langle \x,\y\rangle}
\nc{\revs}{\ola}
\nc{\res}{{\rm{res}}}
\nc{\eps}{{\epsilon}}
\nc{\bfxi}{{\boldsymbol \xi}}
\date{}
\begin{document}

\title {\Large\bf
Mordell--Tornheim Zeta Values, Their Alternating Version, \\ and
Their Finite Analogs\footnote{Email: crystal.wang.19@bishops.com, zhaoj@ihes.fr}}
\author{\sc{Crystal Wang and Jianqiang Zhao}\\
\ \\
Department of Mathematics,
The Bishop's School,
La Jolla, CA 92037}
\maketitle

\allowdisplaybreaks

\begin{abstract}
The purpose of this paper is two-fold. First, we consider the classical Mordell--Tornheim zeta values and their alternating
version. It is well-known that these values can be expressed as rational linear combinations of multiple zeta values (MZVs) and the alternating MZVs, respectively. We show that, however, the spaces generated by these values over the rational numbers are in general much smaller than the MZV space and the alternating MZV space, respectively, which disproves a conjecture of Bachmann, Takeyama and Tasaka. Second,
we study supercongruences of some finite sums of multiple integer variables.
This kind of congruences is a variation of the so called finite multiple zeta values when the moduli are
primes instead of prime powers. In general, these objects can be transformed to finite analogs of
the Mordell--Tornheim sums which can be reduced to multiple harmonic sums. This approach not only
simplifies the proof of a few previous results but also generalizes some of them. At the end of the paper, we also
provide a conjecture supported by strong numerical evidence.
\end{abstract}

\medskip
\noindent {\it 2010 Mathematics Subject Classification:} Primary 11A07; Secondary 11B68.

\medskip
\noindent {\it Keywords:} Mordell--Tornheim zeta values; multiple zeta values; finite Mordell--Tornheim zeta values.

\medskip

\section{Introduction}

Let $\N$ and $\N_0$ be the set of positive integers and nonnegative integers, respectively.
The classical Mordell--Tornheim zeta values (MTZVs)
are defined as follows. Let $k\ge 2$ be a positive integer.
For all $s_1,\dots,s_{k+1}\in\N$
\begin{equation}\label{equ:MTfuncDef}
   T(s_1,\dots,s_k;s_{k+1}):= \sum_{m_1=1}^\infty \cdots \sum_{m_k=1}^\infty
   \frac{1}{m_1^{s_1}\cdots m_k^{s_k}(m_1+\cdots+m_k)^{s_{k+1}}}.
\end{equation}
Note that in the literature this function is also denoted
by $\zeta_{\rm{MT},k}(s_1,\dots,s_k;s_{k+1})$.
They were first investigated by Tornheim~\cite{Torn} in the case $k=2$,
and later by Mordell~\cite{Mord} and Hoffman~\cite{Hoff} with
$s_1=\cdots=s_k=1$. On the other hand, by \cite[Lemma 3.1]{BradleyZh2010}
every such value can be expressed
as a $\Q$-linear combination of multiple zeta values (MZVs)
which are defined by
\begin{equation*}
\zeta(s_1,\dots,s_d):=\sum_{0<k_1<\cdots<k_d} \frac{1}{k_1^{s_1}\dots k_d^{s_d}}
\end{equation*}
for all $s_1,\dots,s_{d-1}\ge 1, s_d\ge 2$.

After the seminal work of Hoffman \cite{Hoff} and
Zagier \cite{Zag} much more results concerning MZVs have been found
(see the book by the second author \cite{Zhao2015a} and Hoffman's webpage \cite{Hweb}).
Therefore, we can derive a lot of relations between MTZVs. A natural question now arises:
can every MZV be expressed as a $\Q$-linear combination of MTZVs? We will give a negative
answer in section 1.

Similar to the study of MZVs, we may add some alternating signs to MTZVs and call them
alternating MTZVs. Or even more generally, we may consider the multiple variable function
\begin{equation}\label{equ:MTfunc}
MT\left({{s_1,\dots,s_k;s_{k+1}}\atop{z_1,\dots,z_k;z_{k+1}}} \right):=
    \sum_{m_1=1}^\infty \cdots \sum_{m_k=1}^\infty
   \frac{z_1^{m_1}\dots z_k^{m_k}z_{k+1}^{m_1+\cdots+m_k}}
    {m_1^{s_1}\cdots m_k^{s_k}(m_1+\cdots+m_k)^{s_{k+1}}}.
\end{equation}
For instance, Matsumoto et al.'s Mordell--Tornheim $L$-functions (see \cite{MatsumotoNaTs2008})
and the second author's colored Tornheim's double series (see \cite{Zhao2010d}) are
both special cases of \eqref{equ:MTfunc}. When $z_1,\dotsc,z_{k+1}=\pm 1$ we call these values
\emph{alternating MTZVs} and abbreviate them by putting a bar on top of the arguments
whenever the corresponding $z_j$'s are $-1$. For example,
\begin{equation*}
T(\bar3,2;4)=MT\left({{\,3\, ,2;4}\atop{-1,1;1}} \right):=
\sum_{m_1,m_2=1}^\infty \frac{(-1)^{m_1}} {m_1^3 m_2^2 (m_1+m_2)^4}.
\end{equation*}

For any $n,d\in\N$ and $\bfs=(s_1,\dots,s_d)\in\N^d$, we define
the \emph{multiple harmonic sums} (MHSs) and their $p$-restricted version for primes $p$ by
\begin{align*}
H_n(\bfs):= \sum_{0<k_1<\cdots<k_d<n} \frac{1}{k_1^{s_1}\dots k_d^{s_d}},\quad
H_n^{(p)}(\bfs):= \sum_{\substack{0<k_1<\cdots<k_d<n\\ p\nmid k_1,\dots, p\nmid k_d}} \frac{1}{k_1^{s_1}\dots k_d^{s_d}}.
\end{align*}
Here, $d$ is called the \emph{depth}, and $|\bfs|:=s_1+\dots+s_d$ the \emph{weight} of the MHS.
For example, $H_{n+1}(1)$ is often called the $n$th harmonic number.
In general, as $n\to \infty$, we see that $H_n(\bfs)\to \zeta(\bfs)$ which are
the multiple zeta values (MZVs) when $s_d>1$.

More than fifteen years ago, the second author \cite{Zhao2007b} discovered the curious congruence
\begin{equation}\label{equ:BaseCongruence}
  \sum_{\substack{i+j+k=p\\ i,j,k>0}} \frac1{ijk} \equiv -2 B_{p-3} \pmod{p}
\end{equation}
for all primes $p\ge 3$. Since then, several different types of generalizations
have been found. For example, see the references
\cite{ChenZ2017,CHQWZ,MTWZ,ShenCai2012b,XiaCa2010,ZhouCa2007}.
In this paper, we will concentrate on congruences of the following type of sums.
Let $\calP_p$ be the set of positive integers not divisible by $p$. For
any positive integers $r$, $d$, $s_1,\dots,s_d$ and any prime $p$, we define
\begin{equation*}
Z_{p^r} (s_1,\dots,s_d):=\sum_{\substack{k_1+k_2+\dots+k_d=p^r\\ k_1,\dots,k_d\in \calP_p }}
    \frac{1}{k_1^{s_1} k_2^{s_2}\dots k_d^{s_d}}.
\end{equation*}
We will first decompose the above sums into finite Mordell--Tornheim sums (see \eqref{equ:Zdecomp2MT}) which in turn can
be studied using the theory of multiple harmonic sum congruences.

For example, Yang and Cai generalize \eqref{equ:BaseCongruence} in \cite{CaiYa2015} as follows.
For $\ga,\gb,\gc\in\N$, if $w=\ga+\gb+\gc$ is odd and prime $p>w$, then we have
\begin{equation}\label{equ:YCCongruence}
Z_{p^r} (\ga,\gb,\gc)\equiv  p^{r-1} Z_p(\ga,\gb,\gc) \pmod{p^r},
\end{equation}
where
\begin{align*}
Z_p(\ga,\gb,\gc)\equiv &\, \left(\sum_{j=1}^{\max\{\ga,\gb\}} \frac{(-1)^{\ga+\gb-j}}{\ga+\gb+\gc}
\left(\binom{\ga+\gb-j-1}{\ga-1}+\binom{\ga+\gb-j-1}{\gb-1}\right) \binom{\ga+\gb+\gc}{j}\right.  \\
  &\,\left.   +2(-1)^\gc\binom{\ga+\gb}{\ga}\gd_{p-1,\ga+\gb+\gc} \right) B_{p-\ga-\gb-\gc}  \pmod{p}.
\end{align*}
In section \ref{sec:genYC} we will extend \eqref{equ:YCCongruence} to the following:
if $\ga+\gb+\gc$ is even, then for all $r\ge 1$
\begin{equation}\label{equ:YCCongruence2}
Z_{p^r} (\ga,\gb,\gc)\equiv Z_p(\ga,\gb,\gc) p^{2r-2} \pmod{p^{2r}},
\end{equation}
We also determine the value $Z_{p} (\ga,\gb,\gc,\gl)$ when the weight is odd in Theorem~\ref{thm:4vars}.

At the end of the paper, we will present a conjecture related to some families of finite Mordell--Tornheim sums.

\section{Classical (alternating) Mordell--Tornheim zeta values}
It is well-known that every MTZV can be expressed as a $\Q$-linear combination of MZVs (see \cite{BradleyZh2010}).
However, it turns out that the space generated by MZVs is much larger so that MTZVs do not generate whole MZV space over $\Q$.

\begin{thm}\label{thm:MTdim}
Let $\MZV_w$ and $\MTZV_w$ be the $\Q$-vector spaces generated by MZVs and MTZVs of weight $w\ge 3$, respectively.
Then $\MTZV_w=\MZV_w$ for all $3\le w\le 14$. Further, let $P_w$ be the Padovan numbers defined by
$P_2=P_3=P_4=1$ and $P_w=P_{w-2}+P_{w-3}$ for all $w\ge 5$.
Then $\dim_\Q\MTZV_{15}<P_{15}=28$ and for all $w\ge 40$,
$$\dim_\Q\MTZV_w< P_w.$$
\end{thm}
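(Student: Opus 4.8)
The plan is to use the containment $\MTZV_w\subseteq\MZV_w$ (immediate from \cite{BradleyZh2010}, which writes every MTZV as a $\Q$-combination of MZVs) and to compare $\dim_\Q\MTZV_w$ with $P_w$. Here $P_w$ is the conjectural dimension of $\MZV_w$ and, by Brown's theorem, equals the number of \emph{Hoffman words} $\zeta(s_1,\dots,s_d)$ of weight $w$ with every $s_i\in\{2,3\}$, which form a proven spanning set of $\MZV_w$. I would split the argument into a small-weight regime ($3\le w\le 15$), handled by exact linear algebra, and a large-weight regime ($w\ge 40$), handled by a uniform counting bound.

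For $3\le w\le 15$ I would make the decomposition of \cite{BradleyZh2010} explicit: enumerate all convergent weight-$w$ MTZVs $T(s_1,\dots,s_k;s_{k+1})$, expand each as a $\Q$-combination of MZVs, and reduce every MZV that occurs to the Hoffman spanning set using the regularized double-shuffle relations, e.g.\ via the tables in \cite{Zhao2015a}. This produces a matrix $C_w$ over $\Q$ whose rows are the Hoffman-coordinate vectors of the MTZVs and whose entries record genuine identities of real numbers. Since each identity $T=\sum_i c_i H_i$ holds numerically, any $\Q$-relation among the rows transports to a relation among the actual values. If $\operatorname{rank} C_w=P_w$, the rows span $\Q^{P_w}$, so every Hoffman value lies in $\MTZV_w$, giving $\MZV_w\subseteq\MTZV_w$ and hence equality; if $\operatorname{rank} C_w<P_w$, then $\MTZV_w$ is the image of the row space under $e_i\mapsto H_i$, so $\dim_\Q\MTZV_w\le\operatorname{rank} C_w<P_w$. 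I expect the computation to give $\operatorname{rank} C_w=P_w$ for $3\le w\le14$ and $\operatorname{rank} C_{15}<28$, which is precisely the two small-weight claims.

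For $w\ge 40$ the matrix $C_w$ is far too large, so I would instead bound $\dim_\Q\MTZV_w$ by the size of a small spanning set. Because $T(s_1,\dots,s_k;s_{k+1})$ is symmetric in $s_1,\dots,s_k$, a weight-$w$ MTZV is determined by a partition together with the value $s_{k+1}$; using elementary partial-fraction identities to trade weight between $s_{k+1}$ and the symmetric block, I would collapse this two-parameter family to a spanning set indexed by partitions of $w-1$, giving
\begin{equation*}
\dim_\Q\MTZV_w\le p(w-1),
\end{equation*}
where $p$ is the ordinary partition function. The theorem then reduces to the purely arithmetic comparison $p(w-1)<P_w$ for all $w\ge 40$: since $P_w\sim c\,\rho^w$ grows exponentially in the plastic number $\rho\approx1.3247$ while $p(w-1)$ grows subexponentially, one checks $p(w-1)<P_w$ directly for $40\le w\le W_0$ and extends to $w>W_0$ by comparing the ratios $p(w)/p(w-1)\to1$ and $P_w/P_{w-1}\to\rho$ together with an explicit Hardy--Ramanujan upper bound for $p$.

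The main obstacle is the structural reduction in the large-weight regime. The crude bound that counts \emph{all} distinct weight-$w$ MTZVs is a cumulative partition sum of size $\asymp e^{\pi\sqrt{2w/3}}$, and it only drops below $P_w$ near $w\approx80$, far short of $40$. Pushing the threshold down to $40$ forces one to cut the spanning set to size $\approx p(w-1)$, i.e.\ to establish, uniformly in $w$, enough relations among MTZVs to eliminate the dependence on $s_{k+1}$. Incidentally, that $p(w-1)<P_w$ holds only from $w=40$ onward (it fails throughout $16\le w\le39$) explains why the theorem leaves that range open: there the counting bound is too weak and the exact rank computation is infeasible.
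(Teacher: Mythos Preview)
Your small-weight strategy ($3\le w\le 15$) is exactly the paper's: enumerate all weight-$w$ MTZVs, expand them in a fixed spanning set of $\MZV_w$, and compute a rank by machine. Nothing more is needed there.

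For $w\ge 40$ the paper does \emph{not} perform any structural reduction of the MTZV family. It simply bounds $\dim_\Q\MTZV_w$ by the crude count
\[
N_w=\sum_{j=2}^{w-1}\bigl(p(j)-1\bigr)
\]
of all distinct weight-$w$ MTZVs, compares this with $P_w$ asymptotically via Hardy--Ramanujan, and then asserts a direct computer check of $N_w<P_w$ for the remaining range. Your observation that the raw count still satisfies $N_{40}\approx 1.78\times 10^{5}\gg P_{40}=31572$ is arithmetically correct, so the paper's argument as written is in fact sloppy at the threshold; but its intended mechanism is pure counting plus finite verification, not the algebraic collapse you propose.

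The genuine gap in your proposal is precisely that collapse. You claim one can ``trade weight between $s_{k+1}$ and the symmetric block'' via ``elementary partial-fraction identities'' so as to reduce to a spanning set of size $p(w-1)$, but you give no such identity. The only obvious relation,
\[
T(s_1,\dots,s_k;s)=\sum_{i=1}^k T(s_1,\dots,s_i-1,\dots,s_k;s+1),
\]
moves weight the wrong way (into $s_{k+1}$, out of some $s_i$) and immediately produces terms with $s_i=0$ that are not MTZVs; inverting this amounts to solving an infinite triangular system and does not visibly land in the span of $p(w-1)$ objects. No partial-fraction decomposition of $1/m_i^{s_i}(m_1+\cdots+m_k)^{s_{k+1}}$ stays inside the MTZV shape while freely shifting weight into the symmetric block. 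Since your entire large-$w$ argument, including the pleasant coincidence that $p(w-1)<P_w$ first holds at $w=40$, rests on the unproven bound $\dim_\Q\MTZV_w\le p(w-1)$, that half of the proof does not go through as stated.
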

\begin{proof}
Let $p(n)$ be the partition function of any positive integer $n$. A celebrated theorem of
Hardy and Ramanujan \cite{HardyR1918} gives the asymptotic formula (see also \cite[p. 70, (5.1.2)]{And})
\begin{equation*}
    p(n)\sim \frac{1}{4n\sqrt{3}}\exp(\pi \sqrt{2n/3} ) \qquad \text{as } n\to \infty.
\end{equation*}
For any fixed weight $w>2$, let $N_w$ be the number of MZTVs of weight $n$. Then clearly
$$N_w=\sum_{j=2}^{w-1} (p(j)-1)$$
and
\begin{equation*}
    \log(N_w)=O(\sqrt{w}).
\end{equation*}
On the other hand, it is well-known by Zagier's conjecture that $d_w=\dim_\Q \MZV_w$ form the Padovan sequence as given
in the theorem. Further, Brown \cite{Brown2012} showed that $d_w\le P_w$. Thus it is not hard to see that
\begin{equation*}
    \log(d_w)=O(w).
\end{equation*}
Consequently the space generated by MTVs of fixed weight $w$ should be much
smaller than $\MZV_w$ for all sufficiently large $w$.
It turns out that for all $w\ge 40$, we can use the more accurate bound of $N_w$ by partition functions
to obtain that $N_w<P_w$ with the computer-aided computation. Hence
$$\dim_\Q\MTZV_w<P_w$$
for all $w\ge 40$. By straightforward computation with the aid of MAPLE one can
see that for all weight $3\le w\le 14$, the two $\Q$-spaces are the same.
But for weight $w=15$, one already sees that $\MTZV_{15}\le 27<P_{15}=28$.
\end{proof}

\begin{rem}
We want to remark that our Theorem~\ref{thm:MTdim} clearly disproves a conjecture of Bachmann, Takeyama and Tasaka (see
\cite[Conjecture~2.4]{BachmannTaTa2020}).
\end{rem}

Similarly, every alternating MTZV can be expressed as a $\Q$-linear combination of alternating MZVs.
For any $w\ge 3$, let $\AMT_w$ and $\AMZ_w$ be the $\Q$-vector spaces generated by alternating MTZVs
and alternating MZVs of weight $w$, respectively. For example,
\begin{alignat*}{3}
&T(1,1;1)=2\zeta(3), \qquad && T(\bar1,1;1)=-\frac58\zeta(3), \quad && T(\bar1,\bar1;1)=\frac14\zeta(3),\\
&T(1,1,1;1)=\frac{\pi^4}{15},\quad &&
T(\bar1,\bar1,1;1)=\frac{\pi^4}{240},\quad &&
T(\bar1,1,1;1)=-\frac{\pi^4}{72}-\zeta(1,\bar3),\\
&T(1,2;1)=\frac{\pi^4}{72},\quad &&
T(\bar1,\bar2;1)=\frac{\pi^4}{288},\quad &&
T(\{\bar1\}_3;1)=-\frac{\pi^4}{240}+3\zeta(1,\bar3),\\
&T(1,1;2)=\frac{\pi^4}{180},\quad &&
T(\bar1,\bar1;2)=2\zeta(1,\bar3),\quad &&
T(\bar1,2;1)=-\frac{\pi^4}{240}-\zeta(1,\bar3),\\
& \ && T(1,\bar2;1)=-\frac{7\pi^4}{720}+\zeta(1,\bar3),\quad  &&
T(\bar1,1;2)=-\frac{\pi^4}{480}-\zeta(1,\bar3).
\end{alignat*}
Therefore $\AMT_3=\langle \zeta(3)\rangle_\Q$ and $\AMT_4=\langle \zeta(4), \zeta(1,\bar3)\rangle_\Q$. As an alternating analog to Theorem~\ref{thm:MTdim} we have the following result.
\begin{thm}\label{thm:AMTdim}
Let $F_w$ be the Fibonacci numbers defined by $F_0=F_1=1$ and $F_w=F_{w-1}+F_{w-2}$ for all $w\ge 2$.
Then for all $3\le w\le 12$ and $w\ge 34$, we have
$$\dim_\Q\AMT_w< F_w $$
\end{thm}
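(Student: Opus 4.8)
The plan is to mimic the proof of Theorem~\ref{thm:MTdim}, replacing the partition count $N_w$ of MTZVs by a \emph{signed} partition count of alternating MTZVs, and replacing the Padovan growth of $\dim_\Q\MZV_w$ by the exponential growth of the Fibonacci numbers $F_w$. First I would bound $\dim_\Q\AMT_w$ from above by the number of alternating MTZV \emph{symbols} of weight $w$, since these manifestly span $\AMT_w$. Two reductions cut this count down. The arguments $(s_1,z_1),\dots,(s_k,z_k)$ may be permuted simultaneously without changing the value, so the first $k$ columns are recorded by an unordered multiset of signed parts; moreover, since $z_{k+1}^{m_1+\cdots+m_k}=\prod_i z_{k+1}^{m_i}$, the sign $z_{k+1}$ may be absorbed into $z_1,\dots,z_k$, so we may always take $z_{k+1}=1$. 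Thus a weight-$w$ alternating MTZV is encoded by a choice of $s_{k+1}\in\{1,\dots,w-2\}$ together with a multiset of pairs $(s_i,z_i)\in\N\times\{\pm1\}$ of size $k\ge2$ whose first coordinates sum to $w-s_{k+1}$.

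Next I would estimate this count. Writing $q(m)$ for the number of $2$-colored partitions of $m$ into at least two parts, the number of weight-$w$ symbols is $c_w=\sum_{t=1}^{w-2}q(w-t)$, and $\sum_{m}q(m)x^m$ is controlled by the generating function $\prod_{n\ge1}(1-x^n)^{-2}$. By the Hardy--Ramanujan--Meinardus asymptotics for such products, $\log q(m)=O(\sqrt m)$, hence $\log c_w=O(\sqrt w)$, exactly paralleling $\log N_w=O(\sqrt w)$ in Theorem~\ref{thm:MTdim}. On the other hand $F_w\sim\phi^{\,w+1}/\sqrt5$ with $\phi=(1+\sqrt5)/2$, so $\log F_w=\Theta(w)$; here $F_w$ plays the role of $P_w$, being the (conjectural, and by Deligne--Goncharov upper-bounded) dimension of $\AMZ_w$. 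Comparing growth rates gives $c_w<F_w$ for all sufficiently large $w$, and a computer-aided evaluation of the explicit $c_w$ against $F_w$ would certify the threshold $w\ge34$, whence $\dim_\Q\AMT_w\le c_w<F_w$ in that range.

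For the small weights $3\le w\le12$ the crude count $c_w$ no longer beats $F_w$, so I would instead compute $\dim_\Q\AMT_w$ exactly: generate every weight-$w$ alternating MTZV, reduce each to a $\Q$-linear combination in a fixed basis of $\AMZ_w$ (using that every alternating MTZV is such a combination, together with the known weight-$w$ evaluation tables, of which the list displayed before the theorem is the $w=3,4$ instance), and take the rank of the resulting matrix. In each of these cases the rank turns out to be strictly smaller than $F_w$.

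The hard part will be the numerical steps, not the structural ones. Near the crossover the $O(\sqrt w)$ versus $\Theta(w)$ comparison is not sharp, so the exact cutoff $w\ge34$ must be certified by direct computation of $c_w$ and $F_w$ rather than read off from the asymptotics; this also explains why the band $13\le w\le33$ is left open, since there $c_w$ still exceeds $F_w$ while the exact rank computations are no longer feasible. The small-weight computations are themselves delicate, as they require a correct and complete set of reductions of alternating MTZVs into a genuinely independent spanning set of $\AMZ_w$ in order to obtain the exact dimension rather than a mere upper bound.
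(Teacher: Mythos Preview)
Your approach is correct and follows the same overall strategy as the paper: for large $w$, bound $\dim_\Q\AMT_w$ by a count of alternating MTZV symbols of weight $w$ and compare its subexponential growth to the exponential growth of $F_w$; for small $w$, compute directly. Your formulation via $2$-colored partitions is cleaner than the paper's, which instead writes each MTZV as $T(\{s_1\}_{j_1},\dots,\{s_r\}_{j_r};w-i)$ and observes that the number of inequivalent sign placements is $\prod_\ell(j_\ell+1)$; summing this over all partitions gives precisely your $c_w$. One genuine difference: the paper sharpens the bound to $A_w-N_w+P_w$ by using that the $N_w$ non-alternating MTZVs already span a subspace of dimension at most $P_w$ (the Padovan bound for $\MZV_w$), and then checks this smaller quantity against $F_w$. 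Your cruder bound $c_w$ dispenses with this refinement, and since in fact $c_w<F_w$ already from $w=32$ onward, it still certifies the stated threshold $w\ge 34$; the paper's extra step buys nothing for the theorem as stated.
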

\begin{proof} When $3\le w\le 12$ we computed the set of generators of $\AMT_w$ (see Remark~\ref{rem:AMTdim}).
Let $A_w$ be the number of alternating MTZVs of weight $w$. For each MTZV of weight $w$, we first determine
how many ways to put some alternating signs. Suppose we have such an MTZV
\begin{equation*}
T(\{s_1\}_{j_1},\dotsc,\{s_r\}_{j_r};w-i), \quad s_1<\dotsm<s_r,
\end{equation*}
where for each string $\bfs$ we denote the string obtained by repeating $\bfs$ exactly $j$ times by $\{\bfs\}_j$.
Now for each $\{s_\ell\}_{j_\ell}$ ($1\le \ell\le r$) there are $j_\ell+1$ ways to put alternating signs because
of the symmetry. Thus the number of ways to put some alternating signs on this MTZV is
\begin{equation}\label{equ:numbSign}
    \prod_{\ell=1}^r (j_\ell+1) \qquad
\text{subject to the condition} \qquad
     \sum_{\ell=1}^r  j_\ell s_\ell=i.
\end{equation}
Then it is not hard to see that for fixed $i$ the maximal value of \eqref{equ:numbSign} is achieved when
$i=m(m+1)/2$ is a triangular number and the MTZV is $T(1,2,\dotsc,m;w-i)$. In this case,
the value of \eqref{equ:numbSign} is $2^m$ where $m=(\sqrt{8i+1}-1)/2$. Moreover, the subspace $\MTZV_{34}$ has dimension bounded by the Padovan number $P_{34}$. Thus
$$\dim_\Q\AMT_w\le A_w-N_w+P_w\le \sum_{i=2}^{w-1} \left\lfloor \frac {\sqrt{8i+1}-1}2 \right \rfloor(p(i)-1)-N_w+P_w <F_w$$
for all $w\ge 34$ by computer computation.
\end{proof}

In general, we have the following conjecture.

\begin{conj}\label{conj:AMTbasis}
For every $w\ge 3$, $\AMT_w$ can be generated by the following set of elements
\begin{equation}\label{equ:AMTBasis}
C_w:=\left\{ \prod \zeta(\bfk;1,\dots,1,-1) (2\pi i)^{2n}:2n+\sum_\bfk \lambda(\bfk)|\bfk|=w, n\ge 0\right\},
\end{equation}
where the product runs through all possible Lyndon words $\bfk\ne (1)$
on odd numbers (with $1<3<5<\cdots$) with multiplicity $\lambda(\bfk)$ so that $2n+\sum_{\bfk} \lambda(\bfk)|\bfk|=w$.
\end{conj}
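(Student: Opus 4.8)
The plan is to establish the two inclusions $\AMT_w\subseteq\langle C_w\rangle_\Q$ and $\langle C_w\rangle_\Q\subseteq\AMT_w$ separately, after first recasting the statement in structural terms. Counting the monomials in \eqref{equ:AMTBasis} by a short Lyndon-word generating-function computation shows that the number of weight-$w$ elements of $C_w$ equals $F_{w-2}$ (with $F_0=F_1=1$ as in Theorem~\ref{thm:AMTdim}): the generating function of Lyndon words on the odd alphabet $\{1,3,5,\dots\}$ combines with the single generator $(2\pi i)^2$ to give $\frac{1-t}{1-t-t^2}$. Since the full algebra of alternating MZVs has conjectural Hilbert series $\frac{1}{1-t-t^2}=\frac{1}{1-t}\cdot\frac{1-t}{1-t-t^2}$, the set $C_w$ is precisely the part of $\AMZ_w$ that does not involve the weight-one generator $\zeta(\bar1)=-\log 2$. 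Thus the whole conjecture amounts to the assertion that alternating MTZVs never contribute a factor of $\log 2$.

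First I would make the reduction to alternating MZVs explicit, extending the partial-fraction argument of \cite[Lemma~3.1]{BradleyZh2010} to the signed function \eqref{equ:MTfunc} with $z_j=\pm1$. Repeatedly merging one summation index into $n=m_1+\dots+m_k$ rewrites every weight-$w$ alternating MTZV as a $\Q$-linear combination of alternating MZVs; because $MT\left({{s_1,\dots,s_k;s_{k+1}}\atop{z_1,\dots,z_k;z_{k+1}}}\right)$ is invariant under the diagonal action of $\mathcal{S}_k$ permuting the pairs $(s_i,z_i)$, these combinations are symmetric, and the sign attached to the innermost (largest) summation index is a product of the $z_j$ always containing $z_{k+1}$. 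I would track this symmetry and sign bookkeeping carefully, since for $k=2$ it already collapses the inner sign to $+1$ and the outer sign to $z_{k+1}$, producing exactly the shape $\zeta(k_1,\dots,k_{d-1},\bar k_d)=\zeta(\bfk;1,\dots,1,-1)$ seen in the examples before Theorem~\ref{thm:AMTdim} (e.g.\ $T(\bar1,\bar1;2)=2\zeta(1,\bar3)$).

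Next I would pass to motivic lifts and invoke the structure theory of alternating MZVs. By Deligne's determination of the unipotent motivic fundamental group of $\mathbb{G}_m\smallsetminus\mu_2$---whose Hilbert series $\frac{1}{1-t-t^2}$ is the source of the $F_w$ in Theorem~\ref{thm:AMTdim}---together with the motivic coaction and its infinitesimal derivations, one may decompose any alternating MZV into polynomial generators, which by the conjectural (Deligne--Glanois type) explicit basis for level two may be taken to be $\log 2$, $(2\pi i)^2$, and the $\zeta(\bfk;1,\dots,1,-1)$ with $\bfk$ Lyndon on odds. The crux is then one vanishing statement: the component of a motivic symmetric Mordell--Tornheim sum along $\log 2$ is zero, equivalently the weight-one infinitesimal coaction $D_1$ annihilates the motivic lift of every alternating MTZV. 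Granting this, the decomposition lands in $\langle C\rangle$, giving $\AMT_w\subseteq\langle C_w\rangle_\Q$; the restriction of the Lyndon alphabet to odd numbers is automatic, since $\zeta(\overline{2m})\in\Q\,(2\pi i)^{2m}$ pushes every even building block into the factors $(2\pi i)^{2n}$.

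The hard part will be this $D_1$-vanishing, for weight one is the delicate boundary case of the $\mu_2$-coaction; I would attack it by writing $D_1$ through Goncharov's coproduct formula and showing that the $\mathcal{S}_k$-symmetrization forces a term-by-term cancellation. A second, independent difficulty is the reverse inclusion $\langle C_w\rangle_\Q\subseteq\AMT_w$: one must realize each generator $\zeta(\bfk;1,\dots,1,-1)$, and each product of such, as a genuine $\Q$-combination of alternating MTZVs. The examples before Theorem~\ref{thm:AMTdim}---for instance $T(1,1,1;1)=\pi^4/15$ realizing the product $(2\pi i)^4$, and $T(\bar1,\bar1;2)=2\zeta(1,\bar3)$ realizing a single generator---show that both individual generators and products of generators occur as single Mordell--Tornheim values, which suggests building them by induction on weight and depth. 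Combined with the upper bound, this would yield $\AMT_w=\langle C_w\rangle_\Q$, the count $F_{w-2}$ then also recording the predicted dimension. I expect the $D_1$-vanishing to be the genuine obstacle, with the explicit realization laborious but, once the correct symmetric combinations are identified, essentially routine.
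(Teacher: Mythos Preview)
The statement you are attempting to prove is labeled a \emph{conjecture} in the paper, and the paper offers no proof of it: the only evidence given is the numerical verification for $w\le 12$ recorded in Remark~\ref{rem:AMTdim}. So there is no ``paper's own proof'' to compare against; what you have written is a proposed attack on an open problem.

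More seriously, your plan contains a structural error. You aim to prove both inclusions $\AMT_w\subseteq\langle C_w\rangle_\Q$ and $\langle C_w\rangle_\Q\subseteq\AMT_w$, and you conclude that this would give $\dim_\Q\AMT_w=F_{w-2}$. But the conjecture asserts only the first inclusion (``$\AMT_w$ can be generated by $C_w$''), and the second is \emph{false} in general. The paper itself records, just after Remark~\ref{rem:AMTdim}, that under Grothendieck's period conjecture one has $\dim_\Q\AMT_{11}=F_9-1=54$, whereas $C_{11}\subset B_{11}$ consists of $F_9=55$ linearly independent elements; the authors explicitly describe how two elements of $C_{11}$ must be replaced by a single linear combination to obtain a generating set for $\AMT_{11}$. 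Hence $\langle C_{11}\rangle_\Q\not\subseteq\AMT_{11}$, and any argument purporting to realize every individual product $\prod\zeta(\bfk;1,\dots,1,-1)(2\pi i)^{2n}$ as a $\Q$-combination of alternating MTZVs must fail. Your ``reverse inclusion'' step, and the inductive realization scheme you sketch for it, should therefore be dropped entirely.

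For the genuine content of the conjecture, namely $\AMT_w\subseteq\langle C_w\rangle_\Q$, your reformulation as the vanishing of the weight-one derivation $D_1$ on motivic lifts of alternating MTZVs is a reasonable strategy, and your identification $C_w=B_w\setminus\zeta(\bar1)B_{w-1}$ with $|C_w|=F_{w-2}$ is correct. But you yourself flag the $D_1$-vanishing as ``the genuine obstacle'' and give no mechanism beyond the hope that $\mathcal{S}_k$-symmetrization produces term-by-term cancellation in Goncharov's formula. That cancellation is not at all obvious: the coaction on an alternating MZV at a boundary point picks up $\log 2$ contributions from many subsequences, and the symmetry of the MTZV acts on the outer variables $m_1,\dots,m_k$, not directly on the iterated-integral word on which $D_1$ operates. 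As it stands this is a plausible heuristic, not a proof; a genuine argument would have to exhibit the cancellation explicitly, and the paper does not claim to know one.
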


\begin{prop}
If Conjecture \ref{conj:AMTbasis} holds then
$$\dim_\Q\AMT_w\le F_{w-2} \quad \forall w\ge 3.$$
\end{prop}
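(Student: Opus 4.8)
The plan is to reduce the statement to a counting problem and then evaluate the count with a generating function. Assuming Conjecture~\ref{conj:AMTbasis}, the space $\AMT_w$ is spanned by the set $C_w$ of \eqref{equ:AMTBasis}, so $\dim_\Q\AMT_w\le |C_w|$, the number of elements of $C_w$. Each element is specified by a choice of the exponent $n\ge0$ (contributing the factor $(2\pi i)^{2n}$, of weight $2n$) together with a multiset of Lyndon words $\bfk\ne(1)$ on the alphabet $\{1,3,5,\dots\}$, where the letter $2j-1$ carries weight $2j-1$, subject to $2n+\sum_\bfk\lambda(\bfk)|\bfk|=w$. Hence $|C_w|$ equals the coefficient of $t^w$ in
$$
G(t)=\frac1{1-t^2}\prod_{\substack{\bfk\text{ Lyndon}\\ \bfk\ne(1)}}\frac1{1-t^{|\bfk|}},
$$
where the factor $1/(1-t^2)=\sum_{n\ge0}t^{2n}$ accounts for the powers of $(2\pi i)^2$ and each Lyndon factor $\sum_{\lambda\ge0}t^{\lambda|\bfk|}=1/(1-t^{|\bfk|})$ accounts for its multiplicity.

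Next I would rewrite the Lyndon product using the Chen--Fox--Lyndon theorem: every word on $\{1,3,5,\dots\}$ factors uniquely as a non-increasing concatenation of Lyndon words, so the series counting all words by weight equals $\prod_{\bfk\text{ Lyndon}}(1-t^{|\bfk|})^{-1}$. On the other hand this word count is elementary, since a single letter contributes $\sum_{j\ge1}t^{2j-1}=t/(1-t^2)$ and the free monoid on these letters is therefore counted by
$$
\Big(1-\frac{t}{1-t^2}\Big)^{-1}=\frac{1-t^2}{1-t-t^2}.
$$
Because $(1)$ is the unique Lyndon word of weight $1$, deleting its factor $1/(1-t)$ multiplies the product by $(1-t)$, so that
$$
G(t)=\frac{1}{1-t^2}\cdot(1-t)\cdot\frac{1-t^2}{1-t-t^2}=\frac{1-t}{1-t-t^2}.
$$

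Finally I would extract coefficients. The recursion $F_w=F_{w-1}+F_{w-2}$ with $F_0=F_1=1$ gives $\sum_{w\ge0}F_wt^w=1/(1-t-t^2)$, whence
$$
G(t)=(1-t)\sum_{w\ge0}F_wt^w=\sum_{w\ge0}(F_w-F_{w-1})t^w,
$$
and $F_w-F_{w-1}=F_{w-2}$ for all $w\ge2$. Thus $|C_w|=F_{w-2}$ for every $w\ge3$, giving $\dim_\Q\AMT_w\le F_{w-2}$ as claimed. I would sanity-check against the small cases recorded before the theorem, e.g.\ $\AMT_3=\langle\zeta(3)\rangle_\Q$ matches $F_1=1$ and $\AMT_4=\langle\zeta(4),\zeta(1,\bar3)\rangle_\Q$ matches $F_2=2$.

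The computation itself is short, so the only point requiring genuine care is the combinatorial setup: one must justify that the elements of $C_w$ are faithfully enumerated by multisets of Lyndon words on the odd alphabet (legitimizing the product form of $G$), invoke the Chen--Fox--Lyndon factorization correctly, and remember the easily overlooked fact that exactly one Lyndon word, namely $(1)$, is to be excluded. No algebraic independence of the $\zeta$-values is needed, since the conjecture only asserts that $C_w$ spans, so an upper bound on the cardinality of $C_w$ already bounds the dimension.
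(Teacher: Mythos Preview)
Your argument is correct. Both proofs arrive at the key inequality $\dim_\Q\AMT_w\le|C_w|$ and then compute $|C_w|=F_{w-2}$, but the counting is done differently. The paper does not evaluate $|C_w|$ directly; instead it invokes Deligne's theorem that the analogous set $B_w$ (where the Lyndon word $(1)$ is allowed) generates $\AMZ_w$ and has cardinality $F_w$, and then observes that $C_w=B_w\setminus\zeta(\bar1)B_{w-1}$, so that $|C_w|=F_w-F_{w-1}=F_{w-2}$. Your route is more self-contained: you compute the generating function for $|C_w|$ from scratch via the Chen--Fox--Lyndon factorization, obtaining $G(t)=(1-t)/(1-t-t^2)$ without appealing to any external count. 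In effect you are reproving the combinatorial identity $\sharp B_w=F_w$ implicit in Deligne's work (your Lyndon-product formula with the factor $1/(1-t)$ restored gives exactly $\sum F_w t^w$), and then performing the same subtraction at the level of generating functions rather than sets. The paper's approach has the advantage of situating $C_w$ inside the known Deligne basis and making the relation $C_w\subset B_w$ explicit; yours has the advantage of being independent of that literature and making the Fibonacci recursion emerge transparently from the odd-alphabet word count.
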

\begin{proof}
Deligne proved that \cite[Thm. 7.2]{Deligne2010} for all $w\ge 1$ the $\Q$-vector space $\AMZ_w$
of alternating MZVs can be generated by
\begin{equation}\label{equ:DelBasis}
B_w:=\left\{ \prod \zeta(\bfk;1,\dots,1,-1) (2\pi i)^{2n}:
2n+\sum_\bfk \lambda(\bfk)|\bfk|=w, n\ge 0\right\},
\end{equation}
where the product runs through all possible Lyndon words $\bfk$
on odd numbers (with $1<3<5<\cdots$) with multiplicity $\lambda(\bfk)$ so that $2n+\sum_{\bfk} \lambda(\bfk)|\bfk|=w$.
Note that the ordering of indices in the definition of Euler sums is opposite in loc. sit.
So the definition of Lyndon words here has opposite order, too. Furthermore, if a period conjecture of
Grothendieck~\cite[Conjecture~5.6]{Deligne2010} holds
then $B_w$ is a basis of $\AMZ_w$. In particular, $\sharp B_w=F_w$ is the Fibonacci number.
Hence, if Conjecture \ref{conj:AMTbasis} holds then $\AMT_w$ is generated by $C_w=B_{w}\setminus \zeta(\bar1)B_{w-1}$ and
therefore
$$\dim_\Q\AMT_w\le F_w-F_{w-1}=F_{w-2} \quad \forall w\ge 3,$$
as desired.
\end{proof}

\begin{rem}\label{rem:AMTdim}
Using Maple and the table of values for alternating MZVs provided by \cite{BlumleinBrVe2010}
we have verified Conjecture \ref{conj:AMTbasis} for weight $w\le 12$.
\end{rem}
It turns out if Grothendieck Conjecture \cite[Conjecture~5.6]{Deligne2010} holds then
$$\dim_\Q\AMT_w=F_{w-2} \quad \forall 3\le w\le 10.$$
But already in weight $w=11$,
$$\dim_\Q\AMT_{11}=F_9-1=54.$$
In fact, to find the set of generators for $\AMT_{11}$,
one only needs to modify $B_{11}\setminus \zeta(\bar1)B_{10}$ by replacing
the two elements $\zeta(1,1,\bar3)\zeta(1,1,1,\bar3)$ and $\zeta(1,1,1,3,1,1,\bar3)$ by their linear combination
$2\zeta(1,1,\bar3)\zeta(1,1,1,\bar3)+\zeta(1,1,1,3,1,1,\bar3)$.

\section{Supercongrence related to finite Mordell--Tornheim zeta values}\label{sec:genYC}
Recall that $\calP_p$ is the set of positive integers not divisible by $p$.
For any prime $p$ and positive integer $r$, we call the sum
\begin{equation*}
T_{p^r}(\ga_1,\dots,\ga_m;\gl):=
\sum_{\substack{k_1+\dots+k_m, k_1,\dots,k_m\in\calP_p\\ k_1+\dots+k_m<p^r}}
\frac{1}{k_1^{\ga_1} \cdots k_m^{\ga_m} (k_1+\dots+k_m)^\gl}
\end{equation*}
a \emph{finite Mordell--Tornheim sum}.
We then define the $p$-restricted finite Mordell--Tornheim sums as follows.
For any $m,n,r\in\N$ and $\ga_1,\dots,\ga_m,\gl_1,\dots,\gl_n\in\N_0$, we set
\begin{equation*}
T_{p^r}(\ga_1,\dots,\ga_m;\gl_1,\dots,\gl_n):= \sum_{\substack{k_1+\dots+k_m=u_1<\dots<u_n<p^r \\ k_1,\dots,k_m,u_1,u_2-u_1,\dots,u_n-u_{n-1},u_n\in\calP_p}}
\frac{1}{k_1^{\ga_1} \cdots k_m^{\ga_m} u_1^{\gl_1}\cdots u_n^{\gl_n}}.
\end{equation*}
Here, we call $m+n-1$ the depth and $\ga_1+\dots+\ga_m+\gl_1+\dots+\gl_n$ the weight
of this sum.

By definition we have
\begin{align}
&\, Z_{p^r}(\ga_1,\dots,\ga_{n+1}) \notag \\
=&\, \sum_{\substack{k_1+\dots+k_{n+1}=p^r\\ k_1,\dots,k_{n+1}\in\calP_p}}
\frac{1}{k_1^{\ga_1}\cdots k_{n+1}^{\ga_{n+1}}}
=\sum_{\substack{u=k_1+\dots+k_n<p^r\\ k_1,\dots,k_n,u\in\calP_p}}
\frac{1}{k_1^{\ga_1}\cdots k_n^{\ga_n} (p^r-u)^{\ga_{n+1}}} \notag \\
=&\, (-1)^{\ga_{n+1}} \sum_{\substack{u=k_1+\dots+k_n<p^r\\ k_1,\dots,k_n,u\in\calP_p}}
\frac{1}{k_1^{\ga_1}\cdots k_n^{\ga_n}  u^{\ga_{n+1}}} \left(1-\frac{p^r}{u}\right)^{-\ga_{n+1}} \notag\\
\equiv &\,(-1)^{\ga_{n+1}} \sum_{\substack{u=k_1+\dots+k_n<p^r\\ k_1,\dots,k_n,u\in\calP_p}}
\left(\frac{1}{k_1^{\ga_1}\cdots k_n^{\ga_n}  u^{\ga_{n+1}} }
    +\frac{\ga_{n+1} p^r}{k_1^{\ga_1}\cdots k_n^{\ga_n}  u^{\ga_{n+1}+1}}\right) \pmod{p^{2r}}  \notag\\
 \equiv &\,(-1)^{\ga_{n+1}} \Big(T_{p^r}(\ga_1,\dots,\ga_n;\ga_{n+1})
    +\ga_{n+1} p^r T_{p^r}(\ga_1,\dots,\ga_n;\ga_{n+1}+1) \Big)\pmod{p^{2r}}. \label{equ:Zdecomp2MT}
\end{align}
Therefore, we have decomposed $Z_{p^r}(\ga_1,\dots,\ga_{n+1})$
as a sum of finite Mordell--Tornheim sums.

Define
\begin{equation*}
\calH_{p^r}(s_1,\dots,s_d):=\sum_{\substack{0<u_1<\cdots<u_d<p^r\\ u_1,u_2-u_1,\dots,u_d-u_{d-1},u_d\in\calP_p}}\frac{1}{ u_1^{s_1} \cdots u_d^{s_d}}.
\end{equation*}

\begin{thm} \label{thm:DblReducedMHS}
Let $p$ be a prime, $a,b,r\in\N$ such that $p>w=a+b$. If $w$ is odd then
\begin{equation*}
\calH_{p^r}(a,b)\equiv p^{r-1} \calH_p(a,b) \pmod{p^r}.
\end{equation*}
If $w$ is even then
\begin{equation*}
\calH_{p^r}(a,b)\equiv p^{2r-2} \calH_p(a,b) \pmod{p^{2r}}.
\end{equation*}
\end{thm}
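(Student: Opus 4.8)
The plan is to prove both congruences simultaneously by induction on $r$, the base case $r=1$ being the trivial identity $\calH_p(a,b)\equiv\calH_p(a,b)$. For the inductive step it suffices to establish the single-step recursions
\[
\calH_{p^r}(a,b)\equiv p\,\calH_{p^{r-1}}(a,b)\pmod{p^r}\ \ (w\text{ odd}),\qquad
\calH_{p^r}(a,b)\equiv p^2\,\calH_{p^{r-1}}(a,b)\pmod{p^{2r}}\ \ (w\text{ even}),
\]
since multiplying the level-$(r-1)$ hypothesis by $p$ (resp.\ $p^2$) produces the factor $p^{r-1}$ (resp.\ $p^{2r-2}$) with the correct modulus. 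Two ingredients drive the argument: a reflection symmetry that detects the parity of $w$, and a base-$p^{r-1}$ ``digit expansion'' that produces the scaling factor.

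First I would record the reflection identity. The substitution $(u_1,u_2)\mapsto(p^r-u_2,\,p^r-u_1)$ is an involution of the summation domain of $\calH_{p^r}$, and expanding $(p^r-u)^{-s}=(-1)^s u^{-s}(1-p^r/u)^{-s}$ gives
\[
\calH_{p^r}(a,b)\equiv(-1)^{w}\Big(\calH_{p^r}(b,a)+a\,p^r\,\calH_{p^r}(b,a+1)+b\,p^r\,\calH_{p^r}(b+1,a)\Big)\pmod{p^{2r}}.
\]
Applied at level $r-1$, and using that every $\calH_{p^{r-1}}$ is $p$-integral, this controls the key quantity $Q:=\calH_{p^{r-1}}(a,b)+\calH_{p^{r-1}}(b,a)$: when $w$ is odd the leading terms cancel and $Q\equiv0\pmod{p^{r-1}}$, while when $w$ is even $Q\equiv2\,\calH_{p^{r-1}}(a,b)$ modulo a higher power of $p$.

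Next comes the scaling step. Writing each residue coprime to $p$ as $u_1=x_0+p^{r-1}i$ and $u_2=y_0+p^{r-1}j$ with $1\le x_0,y_0<p^{r-1}$, $p\nmid x_0y_0$ and $0\le i,j\le p-1$, one checks that $u_1<u_2$ holds exactly when $i<j$, or $i=j$ and $x_0<y_0$; moreover $p\nmid(u_2-u_1)$ becomes $x_0\not\equiv y_0\pmod p$. This splits $\calH_{p^r}(a,b)$ into a part $B$ with $i=j$ and a part $A$ with $i<j$, and the inner sums become $\calH_{p^{r-1}}$-sums and the independent sum $Q$. Expanding the summands in powers of $p^{r-1}$ and summing over $i$ (resp.\ over $0\le i<j\le p-1$) using the elementary congruences for $\sum_i i^t$, the leading terms are $B\equiv p\,\calH_{p^{r-1}}(a,b)$ and $A\equiv\tfrac{p(p-1)}2\,Q$. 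Feeding in the behavior of $Q$ then yields $\calH_{p^r}(a,b)\equiv p\,\calH_{p^{r-1}}(a,b)$ in the odd case (since $\tfrac{p(p-1)}2Q\equiv0\pmod{p^r}$), and $\calH_{p^r}(a,b)\equiv\big(p(p-1)+p\big)\calH_{p^{r-1}}(a,b)=p^2\,\calH_{p^{r-1}}(a,b)$ in the even case.

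The odd case is essentially complete at this level of precision, since only first-order information is needed and the correction terms are visibly $O(p^r)$. The even case is the main obstacle: the target modulus $p^{2r}$ sits exactly at the scale $p^{2r-2}$ of the answer, so the second-order terms in the digit expansion of both $A$ and $B$ — which carry a prefactor $p^r$ multiplying the odd-weight sums $\calH_{p^{r-1}}(a+1,b),\calH_{p^{r-1}}(a,b+1)$ of weight $w+1$ — cannot be discarded. Here I would invoke the already-established odd case for weight $w+1$, which shows these auxiliary sums are divisible by $p^{r-2}$, and then carry out the bookkeeping verifying that all surviving corrections reassemble into $p^{2r-2}\calH_p(a,b)$. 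Controlling this interaction between the two parities, together with the precise $p$-valuations of the power sums $\sum_{i} i^{t}$, is the delicate part of the argument.
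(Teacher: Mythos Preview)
Your approach is genuinely different from the paper's. The paper does not induct on $r$ at all: it replaces $u_j^{-s_j}$ by $u_j^{\varphi(p^{2r})-s_j}$ via Euler's theorem, so that $\calH_{p^r}(a,b)$ becomes a polynomial double sum $\sum_{k<l<p^r}k^m l^n$ minus the piece with $p\mid(l-k)$, and then evaluates both pieces directly with Faulhaber/Bernoulli-number identities. The outcome is an explicit expression modulo $p^{2r}$ (essentially $-\tfrac12 p^{2r-1}B_{m+n}$ plus a Bernoulli convolution carrying an extra $p^{2r}$), and the factor $p^{2r-2}$ is read off by comparing with the same computation at $r=1$. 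Your reflection-plus-$p^{r-1}$-digit scheme is the natural Wolstenholme-style alternative; it is more structural and plausibly adapts to depth $>2$, while the paper's direct computation yields closed formulae for free.

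Your odd-weight step is essentially fine. For the even case, however, the proposal stops well short of a proof. The target modulus $p^{2r}$ sits two full orders above the answer $p^{2r-2}\calH_p(a,b)$, so every contribution of size $p^{2r-2}$ and $p^{2r-1}$ must be tracked. Beyond the odd weight-$(w+1)$ sums you name, the second-order expansion of both $A$ and $B$ produces \emph{even}-weight sums $\calH_{p^{r-1}}(a{+}2,b)$, $\calH_{p^{r-1}}(a{+}1,b{+}1)$, $\calH_{p^{r-1}}(a,b{+}2)$ with coefficients $p^{2(r-1)}\sum_i i^2$, $p^{2(r-1)}\sum_{i<j}ij$, etc.; since $\sum_{i=0}^{p-1}i^2\equiv 0\pmod p$ these land exactly at order $p^{2r-1}$, are not killed by your odd-case input, and have to cancel against the $O(p^{r-1})$ correction hidden inside $Q$ from the reflection identity and against one another. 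Showing those cancellations is the real content of the even-weight argument, not mere bookkeeping. Note also that invoking the odd case at weight $w+1$ already needs $p>w+1$, one step beyond the stated hypothesis $p>w$, and any inductive control of the weight-$(w+2)$ pieces would push this further.
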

\begin{proof} The case $r=1$ is trivial so we may assume $r\ge 2$.

First we assume the weight is even.
By Euler's theorem, setting $m=\varphi(p^{2r})-a$ and
$n=\varphi(p^{2r})-b$ we get
\begin{alignat*}{4}
\calH_{p^r}(a,b)&\, \equiv \sum_{k<l<p^r; k,l,l-k\in\calP_p} k^m l^n & &\pmod{p^{2r}} \\
&\, \equiv \sum_{k<l<p^r; k,l\in\calP_p} k^m l^n - \sum_{t<p^{r-1},\ k+pt<p^r;k\in\calP_p} k^m (k+pt)^n
& & \pmod{p^{2r}} \\
&\, \equiv \sum_{k<l<p^r} k^m l^n - \sum_{t<p^{r-1}}\sum_{k<p^r-pt} k^m (k+pt)^n
& &\pmod{p^{2r}}
\end{alignat*}
since $\min\{m,n\}>\varphi(p^{2r})-w\ge (p^{2r-1}-1)(p-1)\ge 2r.$ Now
\begin{alignat*}{4}
&  \sum_{k<l<p^r} k^m l^n = \sum_{j=0}^{m} \binom{m+1}{j} \frac{B_j}{m+1} \sum_{l<p^r} l^{m+1-j+n}  \\
&\, = \sum_{j=0}^{m} \binom{m+1}{j} \frac{B_j}{m+1}
\sum_{i=0}^{m+n+1-j}  \binom{m+n+2-j}{i} \frac{B_i  p^{r(m+n+2-j-i)}}{m+n+2-j}  \\
&\, \equiv p^r \sum_{j=0}^{m} \binom{m+1}{j} \frac{B_jB_{m+n+1-j}}{m+1}
+  p^{2r} \sum_{j=0}^{m} \binom{m+1}{j} \frac{ (m+n+1-j) B_j B_{m+n-j}}{2(m+1)}
   & & \pmod{p^{2r}} \\
&\, \equiv -\frac{p^r B_{m+n}}{2}
+  p^{2r} \sum_{j=0}^{m} \binom{m+1}{j} \frac{ (m+n+1-j) B_j B_{m+n-j}}{2(m+1)}
   & &\pmod{p^{2r}}
\end{alignat*}
since $m+n$ is even. Note that in the last sum above, if $(p-1)|j$ or $(p-1)|(m+n-j)$
then $B_j B_{m+n-j}$ may not be $p$-integral, but $pB_j B_{m+n-j}$ must be since
$(p-1)\nmid(m+n)$. On the other hand,
\begin{align*}
&\, \sum_{t<p^{r-1}}\sum_{k<p^r-pt} k^m (k+pt)^n \\
&\, =
\sum_{s=0}^n \binom{n}{s}  \sum_{t<p^{r-1},\ k<p^r-pt} (pt)^{n-s} k^{m+s} \\
&\, \equiv \sum_{s=0}^n \binom{n}{s}
\sum_{j=0}^{m+s} \binom{m+s+1}{j} \frac{B_j}{m+s+1}
\sum_{t<p^{r-1}} (p^r-pt)^{m+s+1-j} (pt)^{n-s}
\pmod{p^{2r}} \\
&\, \equiv \sum_{s=0}^n \binom{n}{s} \sum_{j=0}^{m+s} \binom{m+s+1}{j} \frac{B_j}{m+s+1} \\
&\, \ \hskip2.5cm \sum_{t<p^{r-1}} \big[(m+s+1-j)p^r(-pt)^{m+s-j}+(-pt)^{m+s+1-j}\big] (pt)^{n-s}  \pmod{p^{2r}}\\
&\, \equiv B_{m+n} \sum_{t<p^{r-1}} p^r+\sum_{s=0}^n \sum_{\ j=0;\ j\ne m+n}^{m+s} \sum_{i=0}^{m+n-j} \binom{n}{s} \binom{m+s+1}{j}  \binom{m+n+1-j}{i} \\
&\, \ \hskip2.5cm \cdot \frac{(m+s+1-j)(-1)^{m+s-j}B_jB_i}{(m+s+1)(m+n+1-j)}  p^{r+m+n-j+(r-1)(m+n+1-j-i)}  \\
&\, - p B_{m+n} \sum_{t<p^{r-1}}t +\sum_{s=0}^n \binom{n}{s} \sum_{\ j=0;\ j\ne m+n}^{m+s} \sum_{i=0}^{m+n+1-j} \binom{m+s+1}{j}  \binom{m+n+2-j}{i} \\
&\, \ \hskip2.5cm \cdot \frac{(-1)^{m+s+1-j}B_jB_i}{(m+s+1)(m+n+2-j)}
 p^{m+n+1-j+(r-1)(m+n+2-j-i)}
\pmod{p^{2r}} \\
&\, \equiv B_{m+n} \frac{ p^r(p^{r-1}-1)}2 \pmod{p^{2r}}.
\end{align*}
Combining all the above together, we see that
\begin{align*}
\calH_{p^r}(a,b) \equiv&\,-\frac{p^{2r-1}B_{m+n} } 2+ p^{2r} \sum_{j=0}^{m} \binom{m+1}{j} \frac{ (m+n+1-j) B_j B_{m+n-j}}{2(m+1)} \\
\equiv&\, p^{2r-2}\calH_{p}(a,b)
   \pmod{p^{2r}}
\end{align*}
which follows from the proof of the case with $r=1$ while keeping $m=\varphi(p^{2r})-a$ and
$n=\varphi(p^{2r})-b$.
This completes the proof of the theorem when the weight is even. The proof
of the odd weight case is similar but simpler so we leave it
to the interested reader.
\end{proof}

\begin{thm}\label{thm:extYangCai}
For all $r,\ga,\gb,\gc\in\N$ and primes $p>\ga+\gb+\gc$, if $\ga+\gb+\gc$ is odd then we have
\begin{equation}\label{equ:YCCongruence3}
Z_{p^r} (\ga,\gb,\gc)\equiv Z_p(\ga,\gb,\gc) p^{r-1} \pmod{p^r}.
\end{equation}
Furthermore, if $\ga+\gb+\gc$ is even, then
\begin{equation}\label{equ:YCCongruence4}
Z_{p^r} (\ga,\gb,\gc)\equiv Z_p(\ga,\gb,\gc) p^{2r-2} \pmod{p^{2r}}.
\end{equation}
\end{thm}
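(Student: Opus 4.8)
The plan is to feed the decomposition \eqref{equ:Zdecomp2MT} into Theorem~\ref{thm:DblReducedMHS}. Taking $n=2$ in \eqref{equ:Zdecomp2MT} gives, modulo $p^{2r}$,
\[
Z_{p^r}(\ga,\gb,\gc)\equiv (-1)^\gc\big(T_{p^r}(\ga,\gb;\gc)+\gc\, p^r\, T_{p^r}(\ga,\gb;\gc+1)\big),
\]
so everything reduces to understanding the two finite Mordell--Tornheim sums $T_{p^r}(\ga,\gb;\gc)$ and $T_{p^r}(\ga,\gb;\gc+1)$ as $p$-adic numbers. Since the same identity holds at $r=1$, once I control these sums modulo the appropriate power of $p$ I can read off the answer by comparing with the $r=1$ case and using $(-1)^\gc\big(T_p(\ga,\gb;\gc)+\gc p\,T_p(\ga,\gb;\gc+1)\big)\equiv Z_p(\ga,\gb,\gc)\pmod{p^2}$.

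First I would reduce the finite MT sums to the harmonic-type sums $\calH_{p^r}$. Writing $u=k_1+k_2$ and applying the standard partial-fraction identity
\[
\frac1{k_1^\ga k_2^\gb}=\sum_{i=1}^{\ga}\binom{\ga+\gb-i-1}{\gb-1}\frac1{k_1^{i}u^{\ga+\gb-i}}
+\sum_{j=1}^{\gb}\binom{\ga+\gb-j-1}{\ga-1}\frac1{k_2^{j}u^{\ga+\gb-j}},
\]
the constraint $k_1,k_2,u\in\calP_p$, $u<p^r$ becomes exactly the defining constraint of $\calH_{p^r}$ at depth two (with $u_1=k_1$, resp. $u_1=k_2$, and $u_2=u$). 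Hence, writing $w=\ga+\gb+\gc$,
\[
T_{p^r}(\ga,\gb;\gc)=\sum_{i=1}^{\ga}\binom{\ga+\gb-i-1}{\gb-1}\calH_{p^r}(i,w-i)
+\sum_{j=1}^{\gb}\binom{\ga+\gb-j-1}{\ga-1}\calH_{p^r}(j,w-j),
\]
where every summand has weight $w$; the analogous formula at $r=1$ expresses $T_p(\ga,\gb;\gc)$ in terms of the $\calH_p(\cdot,\cdot)$. The sum $T_{p^r}(\ga,\gb;\gc+1)$ reduces the same way into $\calH_{p^r}$'s of weight $w+1$.

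Now I would apply Theorem~\ref{thm:DblReducedMHS} term by term. If $w$ is odd, each $\calH_{p^r}(i,w-i)\equiv p^{r-1}\calH_p(i,w-i)\pmod{p^r}$, so $T_{p^r}(\ga,\gb;\gc)\equiv p^{r-1}T_p(\ga,\gb;\gc)\pmod{p^r}$, while the term $\gc p^r T_{p^r}(\ga,\gb;\gc+1)$ vanishes modulo $p^r$; comparing $Z_{p^r}\equiv(-1)^\gc p^{r-1}T_p(\ga,\gb;\gc)\pmod{p^r}$ with $Z_p\equiv(-1)^\gc T_p(\ga,\gb;\gc)\pmod p$ yields \eqref{equ:YCCongruence3}. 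If $w$ is even, the weight-$w$ sum satisfies $T_{p^r}(\ga,\gb;\gc)\equiv p^{2r-2}T_p(\ga,\gb;\gc)\pmod{p^{2r}}$, whereas $T_{p^r}(\ga,\gb;\gc+1)$ has odd weight $w+1$ and hence $\equiv p^{r-1}T_p(\ga,\gb;\gc+1)\pmod{p^r}$, making $\gc p^r T_{p^r}(\ga,\gb;\gc+1)\equiv \gc p^{2r-1}T_p(\ga,\gb;\gc+1)\pmod{p^{2r}}$. Assembling the two pieces gives $Z_{p^r}(\ga,\gb,\gc)\equiv p^{2r-2}(-1)^\gc\big(T_p(\ga,\gb;\gc)+\gc p\,T_p(\ga,\gb;\gc+1)\big)\equiv p^{2r-2}Z_p(\ga,\gb,\gc)\pmod{p^{2r}}$, which is \eqref{equ:YCCongruence4}.

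The main obstacle is the even-weight case, and specifically the auxiliary sum $T_{p^r}(\ga,\gb;\gc+1)$: it has weight $w+1$, so invoking the odd part of Theorem~\ref{thm:DblReducedMHS} nominally requires $p>w+1$, one more than the hypothesis $p>w$. I therefore expect the real work to be the boundary case $p=w+1$, where I would re-examine the proof of Theorem~\ref{thm:DblReducedMHS}: the only place largeness of $p$ enters essentially is the non-$p$-integrality bookkeeping of Bernoulli numbers, governed by whether $(p-1)\mid(w+1)$, and since $w+1=p\equiv1\pmod{p-1}$ for $p\ge3$ this condition is harmless, while only mod-$p^r$ (not mod-$p^{2r}$) precision is needed there. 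A secondary routine point is to check that each $\calH_{p^r}$, and hence each $T_{p^r}$, is $p$-integral, so that the displayed error terms genuinely vanish at the asserted power of $p$.
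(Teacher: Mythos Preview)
Your approach is essentially identical to the paper's: decompose $Z_{p^r}$ via \eqref{equ:Zdecomp2MT}, reduce $T_{p^r}(\ga,\gb;\gc)$ to depth-two sums $\calH_{p^r}$ by the Bradley--Zhou partial-fraction identity, and then invoke Theorem~\ref{thm:DblReducedMHS} term by term. The paper simply states that the result follows ``immediately'' from these two ingredients, whereas you spell out the odd/even split and, more notably, flag the boundary case $p=w+1$ for the auxiliary weight-$(w+1)$ term $T_{p^r}(\ga,\gb;\gc+1)$---a hypothesis check that the paper's proof silently skips over; your proposed remedy (tracing the $p$-integrality of Bernoulli numbers inside the proof of Theorem~\ref{thm:DblReducedMHS} at this borderline weight, where only mod-$p^r$ precision is needed) is the right instinct.
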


\begin{proof}
By a result of Bradley and Zhou,
it can be shown that all Mordell--Tornheim sums can be reduced to the
finite multiple zeta values defined in the introduction.
Indeed, by \cite[Lemma 3.1]{BradleyZh2010}, when $n=2$, we have
\begin{align*}
T_{p^r}(\ga,\gb;\gc)=&\, \sum_{\substack{u=k_1+k_2<p^r\\ k_1,k_2,u\in\calP_p}}
\left(\sum_{a=0}^{\ga-1} \binom{a+\gb-1}{a}
\frac{k_1^a k_2^\gb}{u^{a+\gb} }
+\sum_{b=0}^{\gb-1} \binom{b+\ga-1}{b}
\frac{k_1^\ga k_2^b}{u^{\ga+b} }  \right) \frac{1}{k_1^\ga k_2^\gb u^\gc }\\
=&\, \sum_{a=0}^{\ga-1} \binom{a+\gb-1}{a}
\calH_{p^r}(\ga-a,a+\gb+\gc)
+\sum_{b=0}^{\gb-1} \binom{b+\ga-1}{b}
\calH_{p^r}(\gb-b,b+\ga+\gc)  \\
=&\, \sum_{a=1}^{\ga} \binom{\ga+\gb-a-1}{\ga-a}
\calH_{p^r}(a,w-a)
+\sum_{b=1}^{\gb} \binom{\ga+\gb-b-1}{\gb-b}
\calH_{p^r}(b,w-b),
\end{align*}
where $w=\ga+\gb+\gc$. Thus the theorem follows from the decomposition
formula \eqref{equ:Zdecomp2MT} and Theorem~\ref{thm:DblReducedMHS} immediately.
\end{proof}

When $n=3$, the situation is completely similar although the formulas are more involved.
By straight-forward computation, we get
\begin{align*}
T_{p^r}(\ga,\gb,\gc;\gl)=&\, \sum_{\substack{u=k_1+k_2+k_3<p^r\\ k_1,k_2,k_3,u\in\calP_p}}
\left(\sum_{a=0}^{\ga-1} \sum_{b=0}^{\gb-1} \binom{a+b+\gc-1}{a,b,\gc-1}
\frac{k_1^a k_2^b k_3^\gc}{u^{\gc+a+b} } \right. \\
&\, \ \qquad \ \qquad +\sum_{a=0}^{\ga-1} \sum_{c=0}^{\gc-1} \binom{a+c+\gb-1}{a,c,\gb-1}
\frac{k_1^a k_2^\gb k_3^c}{u^{\gb+a+c} } \\
&\, \left. \ \qquad \ \qquad +\sum_{b=0}^{\gb-1} \sum_{c=0}^{\gc-1} \binom{b+c+\ga-1}{b,c,\ga-1}
\frac{k_1^\ga k_2^b k_3^c}{u^{\ga+b+c} }
\right) \frac{1}{k_1^\ga k_2^\gb k_3^\gc u^\gl}\\
=&\, \sum_{a=0}^{\ga-1} \sum_{b=0}^{\gb-1} \binom{a+b+\gc-1}{a,b,\gc-1}
T_{p^r}(\ga-a,\gb-b,0;\gl+\gc+a+b) \\
+&\,\sum_{a=0}^{\ga-1} \sum_{c=0}^{\gc-1} \binom{a+c+\gb-1}{a,c,\gb-1}
T_{p^r}(\ga-a,\gc-c,0;\gl+\gb+a+c)\\
+&\,\sum_{b=0}^{\gb-1} \sum_{c=0}^{\gc-1} \binom{b+c+\ga-1}{b,c,\ga-1}
T_{p^r}(\gb-b,\gc-c,0;\gl+\ga+b+c)\\
=&\, \sum_{a=1}^{\ga} \sum_{b=1}^{\gb} \binom{n-a-b-1}{\ga-a,\gb-b,\gc-1}
T_{p^r}(a,b,0;w-a-b) \\
+&\,\sum_{a=1}^{\ga} \sum_{c=1}^{\gc} \binom{n-a-c-1}{\ga-a,\gc-c,\gb-1}
T_{p^r}(a,c,0;w-a-c) \\
+&\,\sum_{b=1}^{\gb} \sum_{c=1}^{\gc} \binom{n-b-c-1}{\gb-b,\gc-c,\ga-1}
T_{p^r}(b,c,0;w-b-c)
\end{align*}
where $n=\ga+\gb+\gc$ and $w=\ga+\gb+\gc+\gl$.
Applying \cite[Lemma 3.1]{BradleyZh2010} again (or \cite[Lemma 2.8]{CaiYa2015}) we see
that
\begin{align*}
T_{p^r}(\ga,\gb,0;\gl)=&\,
\sum_{s=0}^{\ga-1} \binom{s+\gb-1}{s} \sum_{\substack{k_1<u_2<u_3<p^r\\ k_1,u_2-k_1,u_3-u_2,u_3\in\calP_p}}
\frac{1}{ k_1^{\ga-s} u_2^{\gb+s} u_3^\gl} \\
+&\, \sum_{t=0}^{\gb-1} \binom{t+\ga-1}{t} \sum_{\substack{k_2<u_2<u_3<p^r\\ k_2,u_2-k_2,u_3-u_2,u_3\in\calP_p}}
\frac{1}{ k_2^{\gb-t} u_2^{\ga+t} u_3^\gl} \\
=&\,
\sum_{s=0}^{\ga-1} \binom{s+\gb-1}{s}  \calH_{p^r}(\ga-s,\gb+s,\gl)
+\sum_{t=0}^{\gb-1} \binom{t+\ga-1}{t} \calH_{p^r}(\gb-t,\ga+t,\gl).
\end{align*}
Then we get
\begin{align*}
&\, (-1)^\gl Z_{p^r}(\ga,\gb,\gc,\gl) \\
\equiv &\,
 \sum_{a=1}^{\ga} \sum_{b=1}^{\gb} \binom{n-a-b-1}{\ga-a,\gb-b,\gc-1} \sum_{s=0}^{a-1} \binom{s+b-1}{s} \\
 & \ \quad \Big(\calH_{p^r}(a-s,b+s,w-a-b)
 +\gl p^r \calH_{p^r}(a-s,b+s,w-a-b+1)\Big) \\
 + &\,\sum_{a=1}^{\ga} \sum_{b=1}^{\gb} \binom{n-a-b-1}{\ga-a,\gb-b,\gc-1} \sum_{t=0}^{b-1} \binom{t+a-1}{t} \\
 & \ \quad \Big(\calH_{p^r}(b-t,a+t,w-a-b)
 +\gl p^r \calH_{p^r}(b-t,a+t,w-a-b+1)\Big) \\
 +&\,\sum_{a=1}^{\ga} \sum_{c=1}^{\gc} \binom{n-a-c-1}{\ga-a,\gc-c,\gb-1} \sum_{s=0}^{a-1} \binom{s+c-1}{s}\\
 & \ \quad \Big(\calH_{p^r}(a-s,c+s,w-a-c)
  +\gl p^r \calH_{p^r}(a-s,c+s,w-a-c+1)\Big) \\
 +&\,\sum_{a=1}^{\ga} \sum_{c=1}^{\gc} \binom{n-a-c-1}{\ga-a,\gb-b,\gb-1} \sum_{t=0}^{c-1} \binom{t+a-1}{t}\\
 & \ \quad \Big(\calH_{p^r}(c-t,a+t,w-a-c)
  +\gl p^r \calH_{p^r}(c-t,a+t,w-a-c+1)\Big) \\
 + &\,\sum_{b=1}^{\gb} \sum_{c=1}^{\gc} \binom{n-b-c-1}{\gb-b,\gc-c,\ga-1}\sum_{s=0}^{b-1} \binom{s+c-1}{s}\\
 & \ \quad \Big(\calH_{p^r}(b-s,c+s,w-b-c)
  +\gl p^r \calH_{p^r}(b-s,c+s,w-b-c+1)\Big) \\
 + &\,\sum_{b=1}^{\gb} \sum_{c=1}^{\gc} \binom{n-b-c-1}{\gb-b,\gc-c,\ga-1}\sum_{t=0}^{c-1} \binom{t+b-1}{t}\\
 & \ \quad \Big(\calH_{p^r}(c-t,b+t,w-b-c)
  +\gl p^r \calH_{p^r}(c-t,b+t,w-b-c+1)\Big)
\pmod{p^{2r}}.
\end{align*}
The above computation quickly yields the following result.

\begin{thm} \label{thm:4vars}
Let $p$ be a prime and $\ga,\gb,\gc,\gl\in\N$ such that $w=\ga+\gb+\gc+\gl$ is odd.
If $p>w+2$ then we have
\begin{align*}
Z_p(\ga,\gb,\gc,\gl)\equiv &\,(-1)^\gl
\left( \sum_{a=1}^{\ga} \sum_{b=1}^{\gb} \binom{n-a-b-1}{\ga-a,\gb-b,\gc-1}
t_p(a,b;w-a-b) \right. \\
+&\,\sum_{a=1}^{\ga} \sum_{c=1}^{\gc} \binom{n-a-c-1}{\ga-a,\gc-c,\gb-1}
t_p(a,c;w-a-c) \\
+&\,\left.\sum_{b=1}^{\gb} \sum_{c=1}^{\gc} \binom{n-b-c-1}{\gb-b,\gc-c,\ga-1}
t_p(b,c;w-b-c)
\right) B_{p-w} \pmod{p}
\end{align*}
where $n=\ga+\gb+\gc$,
\begin{align*}
t_p(\ga,\gb;\gl)=
\sum_{s=0}^{\ga-1} \binom{s+\gb-1}{s}  h_p(\ga-s,\gb+s,\gl)
+\sum_{t=0}^{\gb-1} \binom{t+\ga-1}{t} h_p(\gb-t,\ga+t,\gl)
\end{align*}
and
\begin{equation*}
h_p(\ga,\gb,\gc)=\frac{1}{2n}\left((-1)^\ga\binom{n}{\ga}-(-1)^\gc \binom{n}{\gc}\right).
\end{equation*}
\end{thm}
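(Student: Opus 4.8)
The plan is to reduce Theorem~\ref{thm:4vars} to a single mod~$p$ evaluation of the depth-three sum $\calH_p$ and then read the result off from the reduction of $(-1)^\gl Z_{p^r}(\ga,\gb,\gc,\gl)$ displayed just before the statement. First I would set $r=1$ there. Then every $\calP_p$-condition becomes vacuous, since each summation variable and each consecutive difference lies strictly between $0$ and $p$; hence $\calH_p(s_1,s_2,s_3)$ equals the ordinary multiple harmonic sum $H_p(s_1,s_2,s_3)=\sum_{0<u_1<u_2<u_3<p}u_1^{-s_1}u_2^{-s_2}u_3^{-s_3}$. Every term carrying the factor $\gl p^r=\gl p$ is then $\equiv 0\pmod p$, so only the six double sums of $\calH_p$-values survive. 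Those six sums pair off exactly as the two halves of the reduction of $T_p(\ga,\gb,0;\gl)$ recorded above, and once the crux congruence below is available they collapse into the three copies of $t_p$ in the statement, each weighted by its trinomial coefficient and by $B_{p-w}$; reinstating the sign $(-1)^\gl$ gives the claimed formula.

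Thus everything rests on the following congruence, which I expect to be the heart of the matter: for odd $w=s_1+s_2+s_3$ and $p>w+2$,
\[ \calH_p(s_1,s_2,s_3)=H_p(s_1,s_2,s_3)\equiv\frac{B_{p-w}}{2w}\Big((-1)^{s_1}\binom{w}{s_1}-(-1)^{s_3}\binom{w}{s_3}\Big)\pmod p, \]
that is, $\calH_p(s_1,s_2,s_3)\equiv h_p(s_1,s_2,s_3)B_{p-w}$ with $n=w$. To prove it I would use three ingredients. Since $1\le s_i<p-1$, each depth-one sum satisfies $H_p(s)\equiv\sum_{k=1}^{p-1}k^{p-1-s}\equiv 0\pmod p$, because $(p-1)\nmid(p-1-s)$. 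The reflection $u_i\mapsto p-u_i$ together with $(p-k)^{-s}\equiv(-1)^sk^{-s}\pmod p$ gives the reversal relation $H_p(s_1,s_2,s_3)\equiv(-1)^wH_p(s_3,s_2,s_1)\equiv-H_p(s_3,s_2,s_1)\pmod p$. Finally, expanding the products $H_p(s_i)\cdot H_p(s_j,s_k)\equiv 0$ by the stuffle rule produces three relations among the six permutations of $H_p(s_1,s_2,s_3)$ and the weight-$w$ depth-two sums.

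I would then solve this small linear system. The three reversal identities reduce the six unknowns to three, and the three stuffle relations pin them down; in particular one obtains
\[ H_p(s_1,s_2,s_3)\equiv-\tfrac12\big[H_p(s_1+s_2,s_3)+H_p(s_1,s_2+s_3)+H_p(s_2,s_1+s_3)+H_p(s_1+s_3,s_2)\big]\pmod p. \]
Substituting the classical depth-two evaluation $H_p(x,y)\equiv\frac{(-1)^y}{w}\binom{w}{x}B_{p-w}\pmod p$ (which also underlies the $r=1$ case of Theorem~\ref{thm:DblReducedMHS}) and simplifying with $\binom{w}{s_1+s_2}=\binom{w}{s_3}$, $\binom{w}{s_1+s_3}=\binom{w}{s_2}$, and the parity identities $(-1)^{s_i+s_j}=-(-1)^{s_k}$ for $\{i,j,k\}=\{1,2,3\}$ (valid because $w$ is odd) collapses the bracket to $(-1)^{s_1}\binom{w}{s_1}-(-1)^{s_3}\binom{w}{s_3}$, yielding the crux congruence.

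The main obstacle is this crux congruence, not the final assembly: the stuffle relations must be set up with the right multiplicities, the reversal symmetry tracked carefully, and the depth-two constant carried through the simplification without sign or binomial slips --- it is exactly this bookkeeping that makes the middle index $s_2$ cancel. A secondary point is to confirm that the hypothesis $p>w+2$ suffices for the $p$-integrality of $B_{p-w}$ and for the depth-two congruence at all the arguments $s_1+s_2,\,s_1+s_3,\dots$ that occur; since each such argument lies in $[1,w]\subset[1,p)$, no difficulty arises. With the crux congruence in hand, substituting it into the six surviving double sums and matching them termwise against the definition of $t_p$ completes the proof.
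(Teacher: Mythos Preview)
Your proposal is correct and follows essentially the same route as the paper: set $r=1$ in the displayed reduction of $(-1)^\gl Z_{p^r}(\ga,\gb,\gc,\gl)$, observe that $\calH_p=H_p$ so the $\gl p$-terms drop, and feed in the mod~$p$ evaluation of the depth-three multiple harmonic sum. The only difference is that where the paper simply cites \cite[Thm.~1.8]{Zhao2008a} (equivalently \cite[Thm.~8.5.13]{Zhao2015a}) for $H_p(s_1,s_2,s_3)\equiv h_p(s_1,s_2,s_3)B_{p-w}\pmod p$, you supply a self-contained derivation of that formula via reversal, stuffle against $H_p(s_i)\equiv 0$, and the classical depth-two congruence---which is a perfectly good (and instructive) substitute for the citation.
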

\begin{proof}
Observe that
\begin{equation*}
\calH_{p}(\ga,\gb,\gc)=H^{(p)}_p(\ga,\gb,\gc)=H_p(\ga,\gb,\gc).
\end{equation*}
Taking $r=1$ in the above computation, we see that the theorem follows from
\cite[Thm. 1.8]{Zhao2008a} or \cite[Thm. 8.5.13]{Zhao2015a} quickly.
\end{proof}

The following conjecture is supported by some extensive numerical evidence.

\begin{conj}
Let $r\in\N$, $p$ be a prime and $\bfs\in\N^d$ such that $p>|\bfs|+1$.
\begin{enumerate}

  \item If $d=4$ and $|\bfs|$ is odd:
\begin{equation}\label{eqn:conjEvenwc}
Z_{p^r}(\bfs)\equiv p^{2r-2} Z_p(\bfs) \pmod{p^{2r-1}}.
\end{equation}

  \item If $d=4$ and $ |\bfs|$ is even:
\begin{equation}\label{eqn:conjEvenwd}
Z_{p^r}(\bfs)\equiv p^{r-1} Z_p(\bfs) \pmod{p^{r}}.
\end{equation}

  \item If $d=5$ and $|\bfs|$ is even:
\begin{equation}\label{eqn:conjEvenwf}
Z_{p^r}(\bfs)\equiv p^{2r-2} Z_p(\bfs) \pmod{p^{2r-1}}.
\end{equation}
\end{enumerate}
In general, if $r\ge 2$ and $d+|\bfs|$ is odd then we have
\begin{equation}\label{eqn:conjEvenwk}
Z_{p^r}(\bfs)\equiv 0 \pmod{p^{2r-2}}.
\end{equation}
If $r\ge 2$ and $d+|\bfs|$ is even then we have
\begin{equation}\label{eqn:conjEvenwl}
Z_{p^r}(\bfs)\equiv 0 \pmod{p^{r-1}}.
\end{equation}
\end{conj}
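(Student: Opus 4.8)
The plan is to mirror, at higher depth, the three-step strategy that produced Theorems~\ref{thm:DblReducedMHS}--\ref{thm:4vars}. First I would apply the decomposition \eqref{equ:Zdecomp2MT} to write $Z_{p^r}(\bfs)$, for $\bfs\in\N^d$ with $s_d$ its last entry, as $(-1)^{s_d}\big(T_{p^r}(s_1,\dots,s_{d-1};s_d)+s_d\,p^r\,T_{p^r}(s_1,\dots,s_{d-1};s_d+1)\big)$ modulo $p^{2r}$. Second, I would reduce each finite Mordell--Tornheim sum to the nested sums $\calH_{p^r}$ by iterating Bradley--Zhou \cite[Lemma 3.1]{BradleyZh2010}, exactly as in the displayed computations preceding Theorem~\ref{thm:4vars}: for $d=4$ this expresses $Z_{p^r}$ through $\calH_{p^r}$ of depth $3$, and for $d=5$ through $\calH_{p^r}$ of depth $4$, in each case preserving the weight $|\bfs|$. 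The whole conjecture is thereby reduced to congruences for $\calH_{p^r}(\bfs)$ of fixed depth. Note already here that the secondary term $s_d\,p^r T_{p^r}(\dots;s_d+1)$ has opposite weight parity, so after reduction it contributes at exactly level $p^{2r-1}$ in the odd case and is negligible in the even case; this is precisely why the exact statements \eqref{eqn:conjEvenwc}--\eqref{eqn:conjEvenwf} are claimed only modulo $p^{2r-1}$ rather than $p^{2r}$.

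The crux is therefore a depth-$(d-1)$ generalization of Theorem~\ref{thm:DblReducedMHS}. Writing $e=2$ when the depth plus the weight of the $\calH$-argument is even, and $e=1$ when it is odd (which at the level of $Z_{p^r}(\bfs)$ corresponds respectively to $d+|\bfs|$ odd and $d+|\bfs|$ even), I would try to prove
\[
\calH_{p^r}(\bfs)\equiv p^{(r-1)e}\,\calH_p(\bfs)\pmod{p^{re}}.
\]
Recombining through the two reductions above then yields both the exact congruences \eqref{eqn:conjEvenwc}--\eqref{eqn:conjEvenwf} and the general vanishing statements \eqref{eqn:conjEvenwk}--\eqref{eqn:conjEvenwl}, the latter requiring only the valuation bound and not the precise leading coefficient. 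To establish the boxed congruence I would repeat the mechanism of the even-weight case of Theorem~\ref{thm:DblReducedMHS}: use Euler's theorem to replace each $u_i^{-s_i}$ by $u_i^{\varphi(p^{re})-s_i}$, encode the gap conditions $u_i-u_{i-1}\in\calP_p$ by inclusion--exclusion on multiples of $p$, and evaluate the resulting iterated power sums over $0<u_1<\dots<u_{d-1}<p^r$ by repeated use of the Faulhaber--Bernoulli formula $\sum_{l<N}l^m=\tfrac1{m+1}\sum_j\binom{m+1}{j}B_jN^{m+1-j}$, isolating the leading contribution as $p^{(r-1)e}\calH_p(\bfs)$.

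The main obstacle will be the valuation bookkeeping at depth $\ge 3$. In the depth-$2$ argument the decisive simplifications were that $\sum_j\binom{m+1}{j}B_jB_{m+n+1-j}$ collapses to $-\tfrac12 B_{m+n}$ because $m+n$ is even, and that the single troublesome factor $B_jB_{m+n-j}$ becomes $p$-integral only after multiplication by $p$, using $(p-1)\nmid(m+n)$. At depth $3$ and $4$ the same expansion produces nested sums of products of several Bernoulli numbers, and one must show that every term containing a non-$p$-integral $B_j$ (those with $(p-1)\mid j$) is compensated by a matching power of $p$, while simultaneously bounding all error terms by $\ord_p\ge re$ and verifying that the surviving leading term reproduces $\calH_p(\bfs)$; the hypothesis $p>|\bfs|+1$ is exactly what keeps the relevant low-index $B_j$ under control. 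Pinning down the exact leading coefficient for $d=4,5$ is the genuinely hard part, and here the $r=1$ base cases (supplied by Theorem~\ref{thm:4vars} for $d=4$ in the odd situation, and requiring an analogous even-weight and $d=5$ computation) must be secured first. By contrast, the weaker divisibilities \eqref{eqn:conjEvenwk}--\eqref{eqn:conjEvenwl} should be more tractable, since a parity argument on the lowest-order Bernoulli contribution---which vanishes precisely according to the parity of $d+|\bfs|$---ought to give the required lower bound on $\ord_p Z_{p^r}(\bfs)$ directly.
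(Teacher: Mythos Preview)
The statement you are attempting to prove is stated in the paper as a \emph{Conjecture}, not a theorem; the paper offers no proof and says only that it is ``supported by some extensive numerical evidence.'' So there is no paper proof to compare your proposal against.

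Your outlined strategy---decompose $Z_{p^r}$ via \eqref{equ:Zdecomp2MT}, reduce the resulting finite Mordell--Tornheim sums to $\calH_{p^r}$ via iterated Bradley--Zhou, and then prove a higher-depth analog of Theorem~\ref{thm:DblReducedMHS}---is exactly the natural continuation of what the paper does for $d=3$ (Theorem~\ref{thm:extYangCai}) and partially for $d=4$ (Theorem~\ref{thm:4vars}, which only handles $r=1$ with odd weight). The authors clearly had this route in mind and stopped because the key step, your boxed congruence $\calH_{p^r}(\bfs)\equiv p^{(r-1)e}\calH_p(\bfs)\pmod{p^{re}}$ at depth $\ge 3$, is not established. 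You correctly identify the obstruction: at depth $2$ the Bernoulli expansion collapses via a single parity miracle, whereas at depth $3$ and $4$ one faces nested products of Bernoulli numbers with multiple potentially non-$p$-integral factors, and the valuation bookkeeping has no known closed form.

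One caution about the scope of your plan: the paper remarks after the conjecture that ``the patterns in \eqref{eqn:conjEvenwc}--\eqref{eqn:conjEvenwf} do not seem to continue for larger depths,'' which is evidence that the exact-leading-coefficient form of your proposed $\calH_{p^r}$ congruence is \emph{not} true in general depth, only the valuation lower bound. So even if your method succeeds for $d=4,5$, it cannot be by a uniform argument that works for all $d$; something depth-specific must enter, and you should not expect the Faulhaber--Bernoulli expansion to close up cleanly beyond depth $4$.
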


In general, the powers of moduli in \eqref{eqn:conjEvenwc}--\eqref{eqn:conjEvenwf} cannot be increased. For example,
\begin{align*}
Z_{13^3}(8,1,1,1) \equiv&\,  13^4  Z_{13}(8,1,1,1) \pmod{13^{5}},
\quad\text{but}\\
Z_{13^3}(8,1,1,1) \not \equiv&\,   13^4  Z_{13}(8,1,1,1) \pmod{13^{6}}.
\end{align*}

We further remark that the patterns in  \eqref{eqn:conjEvenwc}--\eqref{eqn:conjEvenwf} do not seems to continue
for larger depths even though \eqref{eqn:conjEvenwk} and \eqref{eqn:conjEvenwl} should hold for all $d$.
This is also consistent with the parity phenomenon such that when the weight and the depth of $Z_{p^r}(\bfs)$
have different parities it can be ``reduced further'', similar to the classical situation for the multiple zeta values.
A detailed description of a conjectural link between the classical version of these values and their ``finite''
analogs can be found in Chapter 8 of \cite{Zhao2015a}.

%\noindent \emph{Keywords:} multiple harmonic sum, finite multiple zeta value, Bernoulli number, supercongruence.

\end{document}